\setlist[enumerate]{itemsep=0.5ex}
\theoremstyle{plain}
\newtheorem{theorem}{Theorem}[section]
\newtheorem{question}[theorem]{Question}
\theoremstyle{definition} 
\newtheorem{definition}[theorem]{Definition}
\newtheorem{example}[theorem]{Example}
\newtheorem*{claim*}{Claim}
\theoremstyle{remark}
\numberwithin{equation}{section}
\newcommand{\Sc}{\mathrm{Sc}}
\newcommand{\Bigwedge}{\mathord{\adjustbox{raise=.4ex, totalheight=.7\baselineskip}{$\bigwedge$}}}
\newcommand{\ind}{\textup{Ind}}
\newcommand{\R}{\mathbb{R}}
\newcommand{\tr}{\mathrm{tr}}
\newcommand{\Z}{\mathbb{Z}}
\newcommand{\sph}{\mathbb{S}}
\newcommand{\interior}[1]{%
	{\kern0pt#1}^{\mathrm{\,o}}%
}
\let\save@mathaccent\mathaccent
\newcommand*\if@single[3]{%
	\setbox0\hbox{${\mathaccent"0362{#1}}^H$}%
	\setbox2\hbox{${\mathaccent"0362{\kern0pt#1}}^H$}%
	\ifdim\ht0=\ht2 #3\else #2\fi
}
\newcommand*\rel@kern[1]{\kern#1\dimexpr\macc@kerna}
\newcommand*\wideaccent[2]{\@ifnextchar^{{\wide@accent{#1}{#2}{0}}}{\wide@accent{#1}{#2}{1}}}
\newcommand*\wide@accent[3]{\if@single{#2}{\wide@accent@{#1}{#2}{#3}{1}}{\wide@accent@{#1}{#2}{#3}{2}}}
\newcommand*\wide@accent@[4]{%
	\begingroup
	\def\mathaccent##1##2{%
		\let\mathaccent\save@mathaccent
		\if#42 \let\macc@nucleus\first@char \fi
		\setbox\z@\hbox{$\macc@style{\macc@nucleus}_{}$}%
		\setbox\tw@\hbox{$\macc@style{\macc@nucleus}{}_{}$}%
		\dimen@\wd\tw@
		\advance\dimen@-\wd\z@
		\divide\dimen@ 3
		\@tempdima\wd\tw@
		\advance\@tempdima-\scriptspace
		\divide\@tempdima 10
		\advance\dimen@-\@tempdima
		\ifdim\dimen@>\z@ \dimen@0pt\fi
		\rel@kern{0.6}\kern-\dimen@
		\if#41
		#1{\rel@kern{-0.6}\kern\dimen@\macc@nucleus\rel@kern{0.4}\kern\dimen@}%
		\advance\dimen@0.4\dimexpr\macc@kerna
		\let\final@kern#3%
		\ifdim\dimen@<\z@ \let\final@kern1\fi
		\if\final@kern1 \kern-\dimen@\fi
		\else
		#1{\rel@kern{-0.6}\kern\dimen@#2}%
		\fi
	}%
	\macc@depth\@ne
	\let\math@bgroup\@empty \let\math@egroup\macc@set@skewchar
	\mathsurround\z@ \frozen@everymath{\mathgroup\macc@group\relax}%
	\macc@set@skewchar\relax
	\let\mathaccentV\macc@nested@a
	\if#41
	\macc@nested@a\relax111{#2}%
	\else
	\def\gobble@till@marker##1\endmarker{}%
	\futurelet\first@char\gobble@till@marker#2\endmarker
	\ifcat\noexpand\first@char A\else
	\def\first@char{}%
	\fi
	\macc@nested@a\relax111{\first@char}%
	\fi
	\endgroup
}
\newcommand\overbar{\wideaccent\overline}
\newcommand*{\transpose}{%
	{\mathpalette\@transpose{}}%
}
\newcommand*{\@transpose}[2]{%
	\raisebox{\depth}{$\m@th#1\intercal$}%
}
\begin{document}
		\title{Scalar-mean rigidity   beyond warped product spaces}
	
		\author{Jinmin Wang}
		\address[Jinmin Wang]{State Key Laboratory of Mathematical Sciences, Academy of Mathematics and Systems Science, Chinese Academy of Sciences}
		\email{jinmin@amss.ac.cn}
		\thanks{The first author is partially supported by NSFC 12501169.}
	\author{Zhizhang Xie}
\address[Zhizhang Xie]{ Department of Mathematics, Texas A\&M University }
\email{xie@tamu.edu}
\thanks{The second author is partially supported by NSF 2247322.}
	
		\begin{abstract}
			
		The main scalar-mean extremality and rigidity results in the existing literature concern manifolds whose curvature operators are nonnegative, or warped product spaces with a log-concave warping function whose leaves carry metrics of nonnegative curvature operator.	In this paper, we establish scalar-mean extremality and rigidity theorems for a broad class of Riemannian manifolds with boundary whose metrics are conformal to ones with nonnegative curvature operator.  In particular, our results  extend these theorems beyond the warped product setting and yields new families of manifolds exhibiting  scalar-mean extremality and rigidity.
		
		\end{abstract}
		\maketitle

\section{Introduction}

The study of scalar curvature geometry on Riemannian manifolds has been a central theme in differential geometry and topology over the past several decades. Classical results of Schoen–Yau \cite{MR541332} and Gromov–Lawson \cite{MR569070} showed that the $n$-dimensional torus $\mathbb T^n$ admits no Riemannian metric of positive scalar curvature. This fundamental theorem revealed that the existence of positive scalar curvature metrics is subject to deep topological obstructions.

A complementary rigidity phenomenon was discovered by Llarull \cite{MR2636963}, who proved that the standard round sphere is extremal among all manifolds with the same scalar curvature lower bound and distance lower bound. More precisely, if a closed spin manifold $(M^n,g)$ admits a distance–nonincreasing map of nonzero degree to the round sphere $(\mathbb S^n,g_0)$ and its scalar curvature satisfies $\Sc_g \ge n(n-1)$, then $(M,g)$ must be isometric to $(\mathbb S^n,g_0)$.

Since Llarull’s work, numerous generalizations and extensions of his theorem have been developed. Goette and Semmelmann proved a scalar curvature extremality and rigidity theorem for closed manifolds with nonnegative curvature operator and nonvanishing Euler characteristic \cite{GoetteSemmelmann}, and closed K\"ahler manifolds with non-negative Ricci curvature \cite{GoetteSemmelmannKahler}, in the spin setting.

Subsequently, Lott \cite{Lottboundary} extended this line of ideas to manifolds with boundary, establishing scalar–mean extremality and rigidity in the spin setting for manifolds with nonnegative curvature operator, nonnegative boundary second fundamental form, and nonzero Euler characteristic.

To be precise,  we consider the following notion of scalar–mean extremality and rigidity.

\begin{definition}\label{def:extremal}
		Let $(N^n,\partial N,g)$ be a compact Riemannian manifold with boundary. We say that $(N,\partial N,g)$ is \emph{scalar–mean extremal} if, for any Riemannian metric $g_1$ on $N$ such that $g_1\geq g$,  $\Sc_{g_1}\ge \Sc_{g}$ and $H_{g_1}\ge H_{g}$, one has equalities $\Sc_{g_1}=\Sc_{g}$ and $H_{g_1}=H_{g}$. If, in addition, $g_1$ and $g$ must be equal, then $(N,\partial N,g_N)$ is called \emph{scalar–mean rigid}.
\end{definition}

Here $H_{g}$ is the mean curvature of $\partial N$ under the metric $g$. Our convention is that the mean curvature of the standard unit round sphere $\sph^{n-1}$ as the boundary the Euclidean unit ball has mean curvature $n-1$ at every point. 

In this paper, we consider the following notion of scalar–mean extremality and rigidity in the spin setting.

\begin{definition}\label{def:spinextremal}
	Let $(N^n,\partial N,g_N)$ be a compact Riemannian manifold with boundary. We say that $(N,\partial N,g_N)$ is \emph{spin scalar–mean extremal} if, for any compact Riemannian manifold $(M^n,\partial M,g_M)$ admitting a smooth distance non-increasing spin map $f\colon M\to N$ of nonzero degree and satisfying $\Sc_{g_M}\ge f^*\Sc_{g_N}$ and $H_{g_M}\ge f^*H_{g_N}$, one has equalities $\Sc_{g_M}=f^*\Sc_{g_N}$ and $H_{g_M}=f^*H_{g_N}$. If, in addition, $f$ must be a local isometry, then $(N,\partial N,g_N)$ is called \emph{spin scalar–mean rigid}.
\end{definition}

Here, a map $f\colon M\to N$ is called \emph{spin} if $TM\oplus f^\ast TN$ admits a spin structure; equivalently, if the second Stiefel–Whitney classes satisfy $w_2(M) = f^\ast (w_2(N))$. In particular, the identity map is automatically spin.

\medskip

In \cite[Section~5.5]{Gromov4lectures2019}, Gromov proposed studying scalar–mean extremality and rigidity for Riemannian bands modeled by the log–concave warped product metric
\[
g = dr^2 + \varphi(r)^2 h,
\]
where $h$ is, for instance, a flat metric on a torus or the standard metric on a sphere, and the warping function $\varphi$ is log–concave, i.e.\ $(\log\varphi)'' \le 0$. This family of metrics includes many classical examples, such as hyperbolic metrics.

To establish scalar–mean extremality and rigidity in this setting, Gromov proposed analyzing the so–called $\mu$–bubbles associated with the warping function $\varphi$. Inspired by this approach, Cecchini and Zeidler \cite{Cecchini:2021vs} employed the Dirac operator method to prove scalar–mean extremality and rigidity for a class of log–concave warped products in the spin setting. Gromov further conjectured that the same type of rigidity should persist for warped product spaces that are \emph{degenerate} at the ends. Since such degenerate warped products are noncompact and incomplete, the standard index–theoretic arguments no longer apply. By incorporating a Poincaré–type inequality, the authors of \cite{WangXiedegnerate} established scalar curvature rigidity for a broad class of degenerate log–concave warped products in the spin case. In particular, as a special case, they proved scalar curvature rigidity for the round sphere with two antipodal points removed in the spin setting. This special case was also obtained independently by Bär, Brendle, Hanke, and Wang \cite{MR4733718}.

Moreover, scalar–mean extremality and rigidity have also been established for certain codimension–zero submanifolds with (polyhedral) boundary inside log–concave warped products, including parabolic polyhedra in hyperbolic space and radially convex regions in log–concave warped products \cite{Lihyperbolic, WXDihedralWarped, ChaiWanWarped}.

In general, Gromov posed the following question \cite{MR3822551}:
\begin{quote}
	\emph{Find verifiable criteria for extremality and rigidity; decide which manifolds admit extremal or rigid metrics, and describe particular classes of extremal or rigid manifolds.}
\end{quote}

\medskip

In this paper, we show that scalar–mean rigidity in fact holds for a large class of conformal metrics that extend well beyond the known warped product examples. This generalizes the existing  results in the literature and produces new families of scalar–mean extremal and rigid manifolds.

\begin{theorem}\label{thm:comparison}
	Let $(N^n,\partial N,g)$ be a compact spin Riemannian manifold with boundary. Suppose that $u\in C^\infty(N)$ and $\widetilde g=u^2 g$ is a conformal metric. If
	\begin{enumerate}
		\item $N$ has nonzero Euler characteristic,
		\item $g$ has nonnegative curvature operator,
		\item $\partial N$ has nonnegative second fundamental form in $g$, and
		\item $u^{-1}$ has nonnegative Hessian with respect to $g$,
	\end{enumerate}
	then $(N,\partial N,\widetilde g)$ is spin scalar–mean extremal.
\end{theorem}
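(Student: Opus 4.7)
The plan is to adapt the twisted Dirac operator method of Goette--Semmelmann--Lott \cite{GoetteSemmelmann,Lottboundary} to the conformal setting by combining it with a Callias-type modification whose potential is built from $u$. The hypotheses on $g$ produce a twist bundle with the correct Weitzenböck bound and a nonzero index, while the convexity of $u^{-1}$ with respect to $g$ supplies the nonnegativity of the extra curvature terms introduced by the modification. Using that $g$ has nonnegative curvature operator and that $\chi(N)\neq 0$, one first constructs a Hermitian bundle $E\to N$ such that the twisted Dirac operator on $(N,g)$, with a boundary condition adapted to the nonnegative second fundamental form, has nonzero index proportional to $\chi(N)$, and such that the twist curvature endomorphism $\mathcal R^E$ in the Lichnerowicz--Weitzenböck formula satisfies $\mathcal R^E\geq -\tfrac14\Sc_g$ pointwise. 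Pulling back via the spin map $f$ of nonzero degree produces a twist bundle $f^*E\to M$ and a twisted Dirac operator $D^{f^*E}_{g_M}$ on $(M,g_M)$ whose relative index is nonzero.

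Next, I would introduce the modified operator
\[
\mathcal D \;=\; D^{f^*E}_{g_M} + c\bigl(f^*\nabla_g u^{-1}\bigr)\cdot \Gamma,
\]
where $\Gamma$ is a parallel grading anticommuting with Clifford multiplication by tangent vectors. Squaring $\mathcal D$ yields a Weitzenböck identity in which, beyond the usual $\tfrac14\Sc_{g_M}+\mathcal R^{f^*E}$ term, there appear contributions proportional to $f^*\mathrm{Hess}_g(u^{-1})$ and $f^*|\nabla_g u^{-1}|^2$. Using the conformal transformation formula
\[
\Sc_{\widetilde g} \;=\; v^2\Sc_g + 2(n-1)v\Delta_g v - n(n-1)|\nabla_g v|^2, \qquad v:=u^{-1},
\]
these terms, together with the twist curvature bound from the first step (appropriately rescaled by the operator norm of $df$, controlled by the distance non-increasing hypothesis on $f$ with respect to $\widetilde g$) and the scalar curvature comparison $\Sc_{g_M}\geq f^*\Sc_{\widetilde g}$, are meant to be arranged so that the bulk right-hand side of the identity is a nonnegative quadratic form in the spinor, positivity of the residual Hessian contribution being ensured precisely by hypothesis~(4).

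Finally, one imposes an APS-type or chirality-type boundary condition on $\partial M$ under which $\mathcal D$ is essentially self-adjoint and the boundary integrand arising from integration by parts is bounded below by a multiple of $H_{g_M}-f^*H_{\widetilde g}\geq 0$, after using the conformal transformation rule for the mean curvature together with hypothesis~(3). Combining bulk and boundary positivity forces $\ker\mathcal D=0$, contradicting the nonvanishing of the index unless every inequality used above is in fact an equality, whence the extremality statements $\Sc_{g_M}=f^*\Sc_{\widetilde g}$ and $H_{g_M}=f^*H_{\widetilde g}$ follow. The main difficulty I anticipate lies in the second step: pinning down the precise form of the Callias correction so that the residual $\mathrm{Hess}_g(u^{-1})$ term emerges with the correct sign once the conformal transformation of $\Sc$ is invoked. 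A secondary difficulty will be to arrange the boundary condition so that it is simultaneously compatible with the $g$-convexity of $\partial N$, the self-adjointness of $\mathcal D$, and the $\widetilde g$-rescaled mean curvature comparison.
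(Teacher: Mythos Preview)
Your proposal is correct and follows essentially the same approach as the paper: the authors work on the spinor bundle $S(TM\oplus f^*TN)$, add the Callias potential $\tfrac{n}{2}\mathscr{E}\,c(d(u^{-1}))$ (with $c$ the Clifford action of $f^*TN$ and $\mathscr{E}$ the natural $\Z_2$-grading playing the role of your $\Gamma$), impose the chirality boundary condition $\mathscr{E}\,\overbar c(\overbar\nu)c(\nu)\sigma=-\sigma$, and use a modified connection $\hat\nabla_X=\nabla_X-\tfrac12\overbar c(X)\mathscr{E}c(d(u^{-1}))$ to obtain the integrated lower bound $\|B\sigma\|^2\ge\int_M\tfrac14(\Sc_{g_M}-f^*\Sc_{\widetilde g})|\sigma|^2+\int_{\partial M}\tfrac12(H_{g_M}-f^*H_{\widetilde g})|\sigma|^2$, the Hessian contribution being controlled by a H\"older/trace-norm estimate exactly as you anticipate from hypothesis~(4). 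The index equals $\deg(f)\chi(N)\ne 0$, forcing the equalities.
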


The assumption on nonvanishing Euler characteristic can be dropped in some cases. For example, the same result holds when $N$ has a torus factor (see Theorem~\ref{thm:comparisonTorus}). For instance, let $N=[0,T]\times\mathbb T^{n-1}$ with the flat metric $g=dt^2+g_{\textup{flat}}$. If we regard $\widetilde g=dt^2+\varphi(t)^2g_{\textup{flat}}$ as a conformal deformation of $g$, then condition (4) above is equivalent to the log–concavity of $\varphi$; see Example~\ref{example:warped}. In the special case where $g$ is flat and $u^{-1}$ has vanishing Hessian, we can drop the distance non-increasing assumption on $f$ in the interior of $M$. In particular,  Theorem~\ref{thm:comparison} and Theorem~\ref{thm:comparisonTorus} implies the scalar–mean comparison results of \cite{Cecchini:2021vs, WXDihedralWarped, ChaiWanWarped}. Our results apply to manifolds that are beyond log–concave warped products and provide new family of scalar-mean extremal or rigid metrics; see Section \ref{sec:examples}.

Our method of proof for Theorem~\ref{thm:comparison} is via Dirac operators. It remains unclear whether Theorem~\ref{thm:comparison} admits a non–spinorial proof; see for example the $\mu$-bubble approach for warped products \cite[Section 5.5]{Gromov4lectures2019} \cite{CWXZ}, and the capillary hypersurface approach for certain regions in Euclidean or hyperbolic spaces \cite{Lihyperbolic,WangWangZhu}.

With additional positivity assumptions on the  curvature of $N$ or the conformal factor $u$, we can upgrade the scalar-mean extremality result to a scalar-mean rigidity result.  

\begin{theorem}\label{thm:rigidity}
	With the same assumptions of  Theorem~\ref{thm:comparison}, if at every point $y\in N$, 
	\begin{enumerate}
		\item either Ricci curvature $\mathrm{Ric}(g)_y$ is positive, or
		\item the Hessian of $u^{-1}$ at $y$ is positive,
	\end{enumerate}
	then $(N,\partial N,\widetilde g)$ is spin scalar–mean rigid.
\end{theorem}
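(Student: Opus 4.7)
The plan is to leverage Theorem~\ref{thm:comparison}, which already yields the equalities $\Sc_{g_M}=f^*\Sc_{\widetilde g}$ and $H_{g_M}=f^*H_{\widetilde g}$ under the assumptions of Theorem~\ref{thm:rigidity}. What remains is to upgrade these equalities to the conclusion that $f$ is a local isometry. The strategy is to revisit the spinorial argument behind Theorem~\ref{thm:comparison} and extract, in the equality case, a pointwise algebraic identity strong enough to combine with either positivity hypothesis.

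Concretely, the proof of Theorem~\ref{thm:comparison} produces a nontrivial spinor $\sigma$ on $M$, valued in $S_M\otimes f^*E$ for a twisting bundle $E$ built \`a la Goette--Semmelmann from $\Lambda^*TN$ (so that the index of the relevant twisted Dirac operator captures the nonzero Euler characteristic of $N$), and satisfying a suitable chirality-type boundary condition. The Weitzenb\"ock identity in $\widetilde g$, combined with the conformal transformation law relating $D_{\widetilde g}$ to $D_g$, rewrites the squared $L^2$-norm of $D\sigma$ as a sum of manifestly nonnegative pieces: one involving $\Sc_{g_M}-f^*\Sc_{\widetilde g}$, one involving $H_{g_M}-f^*H_{\widetilde g}$ at the boundary, one involving the pulled-back curvature operator of $g$ paired with the ``defect from isometry'' tensor $\id-(df)^\transpose df$, and one involving the pulled-back Hessian of $u^{-1}$ paired with the same defect tensor. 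Since $\sigma$ is harmonic, all of these terms must vanish pointwise on $\sigma$.

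I would then argue pointwise as follows. By unique continuation for Dirac-type operators, the zero set of $\sigma$ has empty interior, so I choose a dense open set $U\subset M$ on which $\sigma\ne 0$. Fix $y\in U$ and set $x=f(y)$. In case (1), the vanishing of the curvature-operator term at $y$ expresses $f^*\fR(g)_x$, applied to a spinor-tensor built from $\sigma(y)$ and the defect, as zero; strict positivity of $\mathrm{Ric}(g)_x$ makes the relevant curvature-operator contraction nondegenerate on the components annihilating $\sigma(y)$, forcing the defect tensor at $y$ to vanish, i.e.\ $(df_y)^\transpose df_y=\id$. In case (2), positivity of $\mathrm{Hess}(u^{-1})_x$ applied to the vanishing of the Hessian term directly yields the same conclusion. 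Either way, $df_y$ is an isometry on a dense open set, hence everywhere on $M$ by continuity, so $f$ is a local isometry.

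The main obstacle will be justifying the decomposition of the Weitzenb\"ock remainder into pieces that pair the curvature operator (respectively the Hessian of $u^{-1}$) against $\id-(df)^\transpose df$ in a way that makes strict positivity of either factor force vanishing of the defect. This amounts to extending the algebraic lemma of Goette--Semmelmann to the conformal and boundary setting, with a precise accounting of how the conformal factor $u$ interacts both with the twisting bundle $E$ and with the boundary term produced by the chirality-type boundary condition.
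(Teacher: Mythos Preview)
Your overall strategy---trace the equality case back through the chain of inequalities in the extremality proof and use the nontrivial harmonic spinor to force pointwise algebraic identities---matches the paper's. Two refinements are worth noting. First, you do not need unique continuation: the key inequality \eqref{eq:B4} contains a $|\hat\nabla\sigma|^2$ term, so equality gives $\hat\nabla\sigma=0$, and a parallel nonzero section is nowhere vanishing. Second, and more substantively, the decomposition you describe---curvature operator and Hessian each ``paired with the defect tensor $\id-(df)^\transpose df$''---is not the structure of the remainder. The relevant inequalities in the extremality proof are trace-norm (H\"older) estimates of the form $\|R_g\circ\wedge^2 f_*\|_1\le \tr(R_g)\,\|f_*\|^2$ and $\|\mathbf H_\psi\circ f_*\|_1\le \tr(\mathbf H_\psi)\,\|f_*\|$, followed by $\|f_*\|\le e^{-\varphi}$. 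Equality in $\|AB\|_1\le\|A\|_1\|B\|_\infty$ does not by itself force $B$ to be a scalar multiple of the identity, so you cannot read off $(df)^\transpose df=\|f_*\|^2\,\id$ directly from a nondegeneracy statement.

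In particular, positive Ricci does \emph{not} make the curvature operator on $\Lambda^2$ nondegenerate, so your case (1) argument as stated fails. The paper's route is: apply a second H\"older inequality via the square root $\sqrt{R_g}$ to obtain $\tr\big((\wedge^2 f_*)^*R_g(\wedge^2 f_*)\big)=\tr(R_g)\|f_*\|^4$; then, in a singular-value frame $f_*\overbar u_i=\rho_i v_i$, this becomes $\sum_{i<j}\rho_i^2\rho_j^2\,K_g(v_i,v_j)=\|f_*\|^4\sum_{i<j}K_g(v_i,v_j)$, forcing $\rho_i\rho_j=\|f_*\|^2$ whenever $K_g(v_i,v_j)\ne 0$. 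Positive Ricci is exactly the hypothesis that for each $i$ some such $j$ exists, whence all $\rho_i$ coincide and equal $e^{-\varphi}$. Case (2) is handled by the analogous equality analysis of \eqref{Holder} with $\mathbf H_\psi>0$. So the missing idea is this H\"older equality-case computation via singular values and sectional curvatures, not an extension of the Goette--Semmelmann lemma (which is already absorbed in the extremality proof).
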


	This paper is organized as follows. In Section \ref{sec:extremality}, we prove Theorem \ref{thm:comparison} for the extremality theorem. In Section \ref{sec:rigidity}, we prove Theorem \ref{thm:rigidity} for the rigidity theorem. 
In Section \ref{sec:examples}, we give examples where Theorem \ref{thm:comparison} holds. In particular, Example \ref{example:warped} and \ref{example:warpedRegion} show that our results cover the known scalar-mean comparison results for log-concave warped products. We also present new family of spin scalar-mean extremal metrics.

	\section{Scalar-mean extremality}\label{sec:extremality}
In this section, we prove Theorem \ref{thm:comparison}.
\begin{proof}[Proof of Theorem \ref{thm:comparison}]
	Consider a compact Riemannian manifold $(M^n,\partial M,g_M)$ and a spin distance non-increasing map $\widetilde f\colon (M,g_M)\to (N,\widetilde g)$ with non-zero degree, such that $\Sc_{g_M}\geq \widetilde f^*\Sc_{\widetilde g}$ and $H_{g_M}\geq \widetilde f^*H_{\widetilde g}$.
	
	Set $u=e^\varphi$. The scalar curvature of the conformal metric $\widetilde g$ is given by
\begin{equation}\label{eq:conformalSc}
		\Sc_{\widetilde g}=e^{-2\varphi}(\Sc_{g}-2(n-1)\Delta_g \varphi-(n-1)(n-2)|d\varphi|^2)
\end{equation}
	Set $\psi=u^{-1}=e^{-\varphi}$. Then $d\psi=d(e^{-\varphi})=-e^{-\varphi}d\varphi$.
	
	Let $f\colon (M,g_M)\to (N,g)$ be the same map as $\widetilde f$, but viewed with respect to the background metric $g$. Since $\widetilde f$ is distance non-increasing, we have 
	$$\|f_*\|_x\leq u^{-1}(f(x))=e^{-\varphi(f(x))} \quad \textup{for all } x\in M.$$
	where $f_*\colon TM \to TN$ is the differential of $f$. 
	The bundle $TM\oplus f^\ast TN$ admits a spin structure, since $f$ is spin. Here  $TN$ denotes the tangent bundle over $N$ with respect to the metric $g$. Consider the spinor bundle $E\coloneqq S(TM\oplus f^*TN)$ of $TM\oplus f^*TN$ over $M$.  Let $\overbar c$ and $\hat c$ denote the Clifford actions of $TM$ and $f^*TN$ on $E$, respectively, and $\mathscr E$ the natural $\Z_2$-grading on $E$. Define $c(v)\coloneqq i\mathscr E \hat c(v)$ for all $v\in TN$,  so that $c$ and $\overbar c$ are commuting Clifford actions.
	
	Let $\nabla$ be the spinorial connection on $E$ and $D$ the associated Dirac operator
	$$D=\sum_{i=1}^n\overbar c(\overbar e_i)\nabla_{e_i}, $$
where  $\{\overbar e_i\}$ is a local orthonormal basis of $TM$.
	Consider the following Dirac operator with a potential 
	$$B=D+\Psi,~\textup{where }\Psi=\frac n 2\mathscr E c(d\psi).$$
	For any smooth section $\sigma$ of $E$, we have
	\begin{equation}\label{eq:B1}
		\|B\sigma\|^2=\int_M|D\sigma|^2+\frac{n^2}{4}|d\psi|^2|\sigma|^2+\langle [D,\Psi]\sigma,\sigma\rangle+\int_{\partial M}\langle\overbar c(\overbar\nu)\Psi\sigma,\sigma\rangle,
	\end{equation}
	where $\overbar\nu$ is the unit  inner normal vector of $\partial M$.
	Here we have used the fact that
	$$\int_M\langle D\sigma,\Psi\sigma\rangle+\langle \Psi\sigma,D\sigma\rangle=\int_M\langle [D,\Psi]\sigma,\sigma\rangle+\int_{\partial M}\langle\overbar c(\overbar\nu)\Psi\sigma,\sigma\rangle.$$
	Note that
	\begin{equation}\label{eq:commutator}
		[D,\Psi]=\frac n 2\sum_{i=1}^n\overbar c( \overbar e_i)\mathscr E c(\nabla_{f_* \overbar e_i}(d\psi)).
	\end{equation}

	The Hessian of $\psi=u^{-1}$ is given by
	$$\textup{Hess}_{\psi}(X,Y)=\langle\nabla_X(d\psi),Y\rangle, ~ \textup{ for all } X,Y\in TN.$$
	We define the Hessian operator $\mathbf{H}_\psi$ on $TN$ by
	\begin{equation}\label{eq:hessian}
		\langle \mathbf{H}_{\psi}(X),Y\rangle\coloneqq \textup{Hess}_{\psi}(X,Y),~ \textup{ for all } X,Y\in\Gamma(TN).
	\end{equation}
	Therefore, we obtain from \eqref{eq:B1} that
	\begin{equation}\label{eq:B2}
		\begin{split}
			\|B\sigma\|^2=&\int_M|D\sigma|^2+\frac{n^2}{4}|d\psi|^2|\sigma|^2+\frac{n}{2}\sum_i\langle \overbar c(\overbar e_i)\mathscr Ec(\mathbf{H}_{\psi}( f_*\overbar e_i))\sigma,\sigma\rangle\\
			&+\int_{\partial M}\frac{n}{2}\langle\overbar c(\overbar\nu)\mathscr E c(d\psi)\sigma,\sigma\rangle.
		\end{split}
	\end{equation}

	By the Stokes formula, we have
	\begin{equation}\label{eq:Stokes1}
		\int_M|D\sigma|^2=\int_M\langle D^2\sigma,\sigma\rangle+\int_{\partial M}\langle\overbar c(\overbar\nu) D\sigma,\sigma\rangle.
	\end{equation}
	By the Lichnerowicz formula, we have
	$$D^2=-\sum_{i=1}^n\nabla_{\overbar e_i}\nabla_{\overbar e_i}+\mathcal R,$$
	where $\mathcal R$ is the curvature tensor of $E$ given by 
	\begin{equation}\label{eq:R}
		\mathcal R=\frac{\Sc_{g_M}}{4}-\frac 1 4\sum_{i\ne j}\overbar c(\overbar e_i)\overbar c(\overbar e_j)c(R_g(f_*\overbar e_i\wedge f_*\overbar e_j))
	\end{equation}
	with $R_g$ the curvature operator of $(N,g)$. By \cite[Section 1.b]{GoetteSemmelmann}, we have
	\begin{equation}\label{eq:R>=}
		\mathcal R\geq \frac{\Sc_{g_M}}{4}-\frac{\Sc_{g}}{4 e^{2\varphi}}
	\end{equation}
	as $(N,g)$ has non-negative curvature operator and $\|f_*\|\leq e^{-\varphi}$.
	
	Consider a new connection on $E$ given by
	\begin{equation}\label{eq:conn}
		\hat\nabla_X=\nabla_X-\frac 1 2\overbar c(X)\mathscr E c(d\psi).
	\end{equation}
	We see that
	\begin{align*}
		\sum_{i=1}^n\hat\nabla_{\overbar e_i}^*\hat\nabla_{\overbar e_i}=&~\sum_{i=1}^n\left(-\nabla_{\overbar e_i}-\frac 1 2\overbar c(\overbar e_i)\mathscr E c(d\psi)\right)\left(\nabla_{\overbar e_i}-\frac 1 2\overbar c(\overbar e_i)\mathscr E c(d\psi)\right)\\
		=&~-\sum_{i=1}^n\nabla_{\overbar e_i}\nabla_{\overbar e_i}+\frac n 4|d\psi|^2+\frac 1 2\sum_i[\nabla_{\overbar e_i},\overbar c(\overbar e_i)\mathscr E c(d\psi)]\\
		=&~D^2-\mathcal R+\frac n 4|d\psi|^2+\frac 1 2\sum_{i=1}^n \overbar c(\overbar e_i)\mathscr Ec(\nabla_{f_*\overbar e_i}d\psi)\\
		=&~D^2-\mathcal R+\frac n 4|d\psi|^2+\frac 1 2\sum_{i=1}^n \overbar c(\overbar e_i)\mathscr Ec(\mathbf H_{\psi}(f_*\overbar e_i)).
	\end{align*}
	Therefore,
	\begin{equation}\label{eq:D^2Fredrich}
		D^2=\sum_i\hat\nabla_{\overbar e_i}^*\hat\nabla_{\overbar e_i}-\frac n 4|d\psi|^2-\frac 1 2\sum_i \overbar c(\overbar e_i)\mathscr Ec(\mathbf H_{\psi}(f_*\overbar e_i)) + \mathcal R.
	\end{equation}
	By the Stokes formula,
	\begin{equation}\label{eq:Stokes}
		\int_M\sum_i\langle\hat\nabla_{\overbar e_i}^*\hat\nabla_{\overbar e_i}\sigma,\sigma\rangle=\int_M|\hat\nabla\sigma|^2+\int_{\partial M}\langle\hat\nabla_{\overbar\nu} \sigma,\sigma\rangle.
	\end{equation}
	
	Now we consider the boundary terms from \eqref{eq:B2}, \eqref{eq:Stokes1} and \eqref{eq:Stokes}. Assume that $\sigma$ satisfies the following \emph{boundary condition}
	\begin{equation}\label{eq:bdryCondition}
		\mathscr E\overbar c(\overbar \nu)c(\nu)\sigma=-\sigma,
	\end{equation}
	where $\nu$ is the unit inner normal vector of $\partial N$ in $(N,g)$. As $\widetilde g$ is conformal to $g$, $\nu$ is also the inner normal vector of $\partial N$ in $(N,\widetilde g)$. We define 
	\begin{equation*}
		\overbar c_\partial(\overbar e_\lambda)\coloneqq\overbar c(\overbar \nu )\overbar c(\overbar e_\lambda) \textup{ and } c_\partial( e_\lambda)\coloneqq c(\nu) c( e_\lambda)
	\end{equation*}
	for $\overbar e_\lambda \in T(\partial M)$ and $e_\lambda \in T(\partial N)$. Moreover, we  define the following boundary connection on $E = S(TM\oplus f^\ast TN)$ over $\partial M$:
	\begin{equation*}
		\nabla^{\partial}_{\overbar e_\lambda}=\nabla_{\overbar e_\lambda}+\frac 1 2 \overbar c(\prescript{M}{}\nabla_{\overbar e_\lambda} \overbar \nu)\overbar c(\overbar \nu)  + \frac 1 2 c(\prescript{N}{}\nabla_{f_\ast\overbar e_\lambda}  \nu )c(\nu),
	\end{equation*}
	where $\prescript{M}{}\nabla$ and $\prescript{N}{}\nabla$ are Levi-Civita connections on $(M, g_M)$ and $(N, g)$ respectively.  We define  the boundary Dirac operator $D^\partial$  to be 
	\begin{equation}\label{eq:boundarydirac}
		D^{\partial}\coloneqq \sum_{j=1}^{n-1}\overbar c_\partial(\overbar e_j)\nabla^{\partial}_{\overbar e_j}.
	\end{equation}
	A straightforward computation (cf. \cite[Section 2]{Wang:2021tq}) shows that 
	\[  c(\overbar \nu) D + \nabla_{\overbar \nu} = D^\partial + \mathcal H,     \]
	where $\mathcal H$ is given by 	\begin{equation}\label{eq:H}
		\mathcal H=\frac{H_{g_M}}{2}-\sum_{\mu=1}^{n-1}\frac 1 2\overbar c(\overbar\nu)\overbar c(\overbar e_\mu)c(\nu)c(Af_*\overbar e_\mu),
	\end{equation}
	with $\{e_\mu\}$ a local orthonormal basis of tangent bundle $T(\partial M)$ and $A$ the second fundamental form of $\partial N$ in $(N,g)$.
	 In particular, the boundary terms from \eqref{eq:B2}, \eqref{eq:Stokes1} and \eqref{eq:Stokes} together give the following
	\begin{equation}\label{eq:boundary1}
		\begin{split}
		&\int_{\partial M}\langle\overbar c(\overbar \nu) D\sigma,\sigma\rangle+\langle\hat\nabla_\nu \sigma,\sigma\rangle+\frac{n}{2}\langle\overbar c(\overbar\nu)\mathscr E c(d\psi)\sigma,\sigma\rangle\\
		=&\int_{\partial M}\sum_{\mu=1}^{n-1}\langle\overbar c( \overbar\nu)\overbar c(e_\mu)\nabla_{e_\mu}\sigma,\sigma\rangle-\frac 1 2\langle\overbar c(\overbar\nu)\mathscr E c(d\psi)\sigma,\sigma\rangle+\frac{n}{2}\langle\overbar c(\overbar\nu)\mathscr E c(d\psi)\sigma,\sigma\rangle\\
		=&\int_{\partial M}\langle D^\partial\sigma,\sigma\rangle+\langle\mathcal H\sigma,\sigma\rangle+\frac{n-1}{2}\langle\overbar c(\nu)\mathscr E c(d\psi)\sigma,\sigma\rangle.
	\end{split}
	\end{equation}
	
	Now suppose that $\sigma$ satisfies the boundary condition \eqref{eq:bdryCondition}. It follows that  	$$\langle D^\partial\sigma,\sigma\rangle=0,$$
	and
	$$\frac{n-1}{2}\langle\overbar c(\overbar\nu)\mathscr E c(d\psi)\sigma,\sigma\rangle=\frac{n-1}{2}\langle d\psi,\nu \rangle|\sigma|^2.$$
	By \cite{Lottboundary} and \cite[Lemma 2.3]{Wang:2021tq}, we obtain that
	\begin{equation}\label{eq:H>=}
		\mathcal H\geq \frac{H_{g_M}}{2}-\frac{H_g}{2e^\varphi}
	\end{equation}
	as $\partial N$ in $(N,g)$ has non-negative second fundamental form and $\|f_*\|\leq e^{-\varphi}$.
	Thus the boundary contribution \eqref{eq:boundary1} becomes
	\begin{equation}\label{eq:bdry}
		\begin{split}
		&\int_{\partial M}\langle\overbar c(\nu) D\sigma,\sigma\rangle+\langle\hat\nabla_\nu \sigma,\sigma\rangle+\frac{n}{2}\langle\overbar c(\overbar\nu)\mathscr E c(d\psi)\sigma,\sigma\rangle\\
		\geq &\int_{\partial M}\left(\frac{ H_{g_M}}{2}-\frac{H_g}{2e^\varphi}+\frac{(n-1)\langle d\varphi,\nu\rangle}{2e^\varphi}\right)|\sigma|^2=\int_{\partial M}\left(\frac{H_{g_M}}{2}-\frac{H_{\widetilde g}}{2}\right)|\sigma|^2,
	\end{split}
	\end{equation}
	where we have used the fact that 
	\begin{equation}\label{eq:Hconformal}
		H_{\widetilde g}=e^{-\varphi}(H_{g}-(n-1)\langle d\varphi,\nu\rangle).
	\end{equation}
	
	To summarize, we have obtain that
\begin{equation}\label{eq:B3}
		\begin{split}
		\|B\sigma\|^2\geq& \int_M|\hat\nabla\sigma|^2+\frac{ \Sc_{g_M}}{4}|\sigma|^2-\frac{\Sc_g}{4\varphi^2}|\sigma|^2
		+(\frac{n^2}{4}-\frac{n}{4})|d\psi|^2|\sigma|^2\\
		&+\frac{n-1}{2}\sum_{i=1}^n\langle \overbar c(\overbar e_i)\mathscr Ec(\textbf{H}_{\psi}(f_*\overbar e_i))\sigma,\sigma\rangle\\
		&+\frac 1 2\int_{\partial M}(H_{g_M}-H_{\widetilde g})|\sigma|^2.
	\end{split}
\end{equation}

Recall that $\psi = e^{-\varphi}$. 	We claim that
	\begin{equation}\label{eq:>=Hess}
		\sum_{i=1}^n\langle \overbar c(\overbar e_i)\mathscr Ec(\textbf{H}_{\psi}(f_*\overbar e_i))\sigma,\sigma\rangle\geq \frac{-\Delta\psi}{e^\varphi}|\sigma|^2=\frac{-|d\varphi|^2+\Delta\varphi}{e^{2\varphi}}|\sigma|^2.
	\end{equation}
	In fact, we consider the singular value decomposition of the operator 
	\[ \mathbf{H}_\psi\circ f_*\colon TM\to TN, \] that is, consider locally orthonormal basis $\{\overbar u_i\}$ of $TM$ and $\{v_i\}$ of $TN$ such that
	$$\textbf{H}_\psi(f_*\overbar u_i)=\lambda_iv_i$$
	for some  $\lambda_i\geq 0$. 
	Therefore, we have
	\begin{equation}\label{eq:traceNorm}
		\sum_{i=1}^n\overbar c(\overbar e_i)\mathscr Ec(\textbf{H}_{\psi}(f_*\overbar e_i))=\sum_{i=1}^n \overbar c(\overbar u_i)\mathscr Ec(v_i)\lambda_i\geq -\sum_{i=1}^n\lambda_i=-\|\textbf{H}_\psi\circ f_*\|_1,
	\end{equation}
	where $\|\cdot\|_1$ denotes the trace norm. By the H\"older inequality, we have
	\begin{equation}\label{Holder}
		\|\textbf{H}_\psi\circ f_*\|_1\leq \|\textbf{H}_\psi\|_1\cdot\|f_*\|\leq\tr(\textbf{H}_\psi) e^{-\varphi}=\frac{\Delta\psi}{e^\varphi}, 
	\end{equation}
	where we used the assumption that the Hessian of $\psi = u^{-1}$ is nonnegative in the second inequality.  This finishes the proof of the claim.
	
	Therefore, line \eqref{eq:>=Hess} together with the equation  \eqref{eq:conformalSc} implies that 
	\begin{equation}\label{eq:>=conformalSc}
		\begin{split}
		&-\frac{\Sc_{g}}{4e^{2\varphi}}
		+(\frac{n^2}{4}-\frac{n}{4})|d\psi|^2+\frac{n-1}{2}\sum_{i=1}^n\overbar c(e_i)\mathscr Ec(\textbf{H}_{\psi}(f_*e_i))\\
		\geq &-\frac{\Sc_{g}}{4e^{2\varphi}}
		+\frac{n(n-1)}{4}\frac{|d\varphi|^2}{e^{2\varphi}}+\frac{n-1}{2}\cdot\frac{-|d\varphi|^2+\Delta\varphi}{e^{2\varphi}}\\
		= &-\frac 1 4\left(\frac{\Sc_g}{e^{2\varphi}}-n(n-1)\frac{|d\varphi|^2}{e^{2\varphi}}-2(n-1)\frac{-|d\varphi|^2+\Delta\varphi}{e^{2\varphi}}\right)\\
		=&-\frac 1 4e^{-2\varphi}\left(\Sc_g-2(n-1)\Delta\varphi-(n-1)(n-2)|d\varphi|^2\right)=-\frac 1 4\Sc_{\widetilde g}.
	\end{split}
	\end{equation}
	
To summarize,  we have 	\begin{equation}\label{eq:B4}
		\|B\sigma\|^2\geq\int_M|\hat\nabla\sigma|^2+\frac{\Sc_{g_M}-\Sc_{\widetilde g}}{4}|\sigma|^2+\int_{\partial M}\frac{H_{g_M}-H_{\widetilde g}}{2}|\sigma|^2.
	\end{equation}
By assumption, $\Sc_{g_M}\geq \widetilde f^*\Sc_{\widetilde g}$ and $H_{g_M}\geq \widetilde f^*H_{\widetilde g}$. Therefore, if either $\Sc_{g_M}\ne \widetilde f^*\Sc_{\widetilde g}$ somewhere on $M$ or $H_{g_M}\ne \widetilde f^*H_{\widetilde g}$ somewhere on $\partial M$, then  the operator $B$ subject to the boundary condition \eqref{eq:bdryCondition} is invertible. However, by the Atiyah--Singer index theorem, we have
	$$\ind(B)=\ind(D)=\deg(f)\chi(N)\ne 0.$$
	This leads to a contradiction and finishes the proof.
\end{proof}

\section{Scalar-mean rigidity}\label{sec:rigidity}
We now prove Theorem \ref{thm:rigidity} in this section.

\begin{proof}[Proof of Theorem \ref{thm:rigidity}]
	Assume that $(M^n,\partial M,g_M)$ is a compact Riemannian manifold with boundary and $\widetilde f\colon (M,g_M)\to (N,\widetilde g)$ is a spin distance non-increasing map with non-zero degree such that $\Sc_{g_M}= \widetilde f^*\Sc_{\widetilde g}$ and $H_{g_M}= \widetilde f^*H_{\widetilde g}$. We use the same notation as in the proof of Theorem \ref{thm:comparison}. 
	
	The proof is to track the inequalities in the proof of Theorem \ref{thm:comparison}. By the non-vanishing of the index of $B$, there exists a non-zero section $\sigma$ of $E$ satisfying the boundary condition \eqref{eq:bdryCondition} and $B\sigma=0$. It follows from \eqref{eq:B4} that $\hat\nabla\sigma=0$, so $\sigma$ is nowhere vanishing on $M$. Therefore, various  inequalities in the proof of Theorem \ref{thm:comparison} must, in fact, be  equalities on $M$.
	
	Let us first consider the inequality \eqref{eq:R>=}. For any $x\in M$, take the singular value decomposition of $R_g\circ \wedge^2 f_*\colon \Bigwedge^2T_xM\to \Bigwedge^2T_{f(x)}N$. There exist orthonormal basis $\{\overbar  \alpha_k\}$ of $\Bigwedge^2T_xM$, $\{\beta_k\}$ of $\Bigwedge^2T_{f(x)}N$, and $\lambda_i\geq 0$, such that
	$$R_g\circ \wedge^2 f_*(\overbar \alpha_k)=\lambda_k\beta_k.$$
	Therefore, we have
	\begin{align*}
		&\frac 1 4\sum_{i\ne j}\overbar c(\overbar e_i)\overbar c(\overbar e_j)c(R_g(f_*\overbar e_i\wedge f_*\overbar e_j))
		=\frac 1 2\sum_{k=1}^{n(n-1)/2}\overbar c(\overbar \alpha_k)c(\beta_k)\lambda_k\\
		\leq~&\frac 1 2\sum_{k=1}^{n(n-1)/2}\lambda_k	=\frac 1 2\|R_g\circ \wedge^2 f_*\|_1\\
		\leq~&\frac 1 2\tr(R_g)\cdot\|f_*\|^2\leq \frac 1 2\tr(R_g)\cdot\frac{1}{e^{2\varphi}}=\frac{\Sc_g}{4e^{2\varphi}}.
	\end{align*}
	Since the equality  $\Sc_{g_M}= \widetilde f^*\Sc_{\widetilde g}$ is attained, every inequality in the above steps must be an equality. In particular, we have 
	\begin{equation}\label{eq:holderequal}
		\frac 1 2\|R_g\circ \wedge^2 f_*\|_1
		=   \frac 1 2\tr(R_g)\cdot\|f_*\|^2.
	\end{equation}  Since $R_g\geq 0$, $R_g$ admits a square root $\sqrt{R_g}$. Therefore, we have
	\begin{align*}
		&\|R_g\circ \wedge^2 f_*\|_1=\|\sqrt{R_g}\cdot\sqrt{R_g}\circ\wedge^2 f_*\|_1\\
		\leq~&\|\sqrt{R_g}\|_2\cdot\|\sqrt{R_g}\circ\wedge^2 f_*\|_2=\sqrt{\tr(R_g)} \cdot  \sqrt{\tr((\wedge^2 f_*)^*\circ R_g\circ \wedge^2 f_*)}\\
		\leq~&\sqrt{\tr(R_g)\cdot\tr(R_g)\|f_*\|^4}=\tr(R_g)\cdot \|f_*\|^2=\frac 1 2\Sc_g\cdot \|f_*\|^2,
	\end{align*}
	which must be an equality because of line \eqref{eq:holderequal}. Let us focus on the consequence of the inequality 
	\[  \tr((\wedge^2 f_*)^*\circ R_g\circ \wedge^2 f_*) \leq \tr(R_g)\|f_*\|^4 = \frac 1 2\Sc_g\cdot \|f_*\|^4 \]
	being an equality. Consider the singular value decomposition of $f_*$, namely an orthonormal basis $\{\overbar u_i\}$ of $T_xM$, an orthonormal basis of $\{v_i\}$ of $T_{f(x)}N$ and $\rho_i\in[0,1]$, such that
	$$f_* \overbar u_i=\rho_i v_i,~\forall i.$$
	Then we have
	\begin{align*}
		~&\tr((\wedge^2 f_*)^*\circ R_g\circ \wedge^2 f_*)\\
		=~&\sum_{i<j}
		\langle(\wedge^2 f_*)^*\circ R_g\circ \wedge^2 f_*) \overbar u_i\wedge \overbar u_j, \overbar u_i\wedge \overbar u_j\rangle
		=\sum_{i<j}
		\langle R_g(f_*\overbar u_i\wedge f_*\overbar u_j),f_*\overbar u_i\wedge f_*\overbar u_j\rangle\\
		=~&\sum_{i<j}\rho_i^2\rho_j^2\langle R_g(v_i\wedge v_j),v_i\wedge v_j\rangle\\
		=~&\sum_{i<j}\rho_i^2\rho_j^2 K_g(v_i,v_j)\leq \|f_*\|^4\sum_{i<j} K _g(v_i,v_j)=\frac{1}{2}\|f_*\|^4 \cdot \Sc_g,
	\end{align*}
	where $K_g$ stands for the sectional curvature of $g$. It follows from the above discussion that 
	\[  \sum_{i<j}\rho_i^2\rho_j^2 K_g(v_i,v_j) =  \|f_*\|^4\sum_{i<j} K _g(v_i,v_j).\] This implies that
	$$\rho_i^2\rho_j^2 K_g(v_i,v_j)= \|f_*\|^4K_g(v_i,v_j),~\forall i<j.$$
	Therefore, if assumption (1) holds, namely,  $(N,g)$ has positive Ricci curvature at $x$, then for every $i$, there exists $j\neq i$ such that $K_g(v_i, v_j) \neq 0$, hence  $\rho_i \rho_j = \|f_\ast\|^2$. It follows that  $\rho_i = \|f_\ast\|$ for all $1\leq i \leq n$. Furthermore, since 
	\[ \frac 1 2\tr(R_g)\cdot\|f_*\|^2\leq \frac 1 2\tr(R_g)\cdot \frac{1}{e^{2\varphi}} \] also has to be an equality, it follows that $\|f_\ast\| = \frac{1}{e^{2\varphi}}$. In other words, all singular eigenvalues of $\widetilde f_\ast$ are equal to $1$, hence $\widetilde f_\ast \colon (T_xM, g_M) \to (T_{f(x)}N, \widetilde g)$ is an isometry.

	If instead assumption (2) holds, that is, the Hessian of $\psi = u^{-1}$ is positive at $x$,  then the H\"older inequality \eqref{Holder} used in the proof of Theorem \ref{thm:comparison} must be an equality. Now a completely similar  argument as above shows that $\widetilde f_\ast \colon (T_xM, g_M) \to (T_{f(x)}N, \widetilde g)$ is an isometry. 
	
	Since $x$ is arbitrary in $M$, it follows that  $\widetilde f\colon (M, g_M) \to (N, \widetilde g)$ is Riemannian covering map. This finishes the proof.

\end{proof}

\section{Examples of the model conformal metrics}\label{sec:examples}

In this section, we give several  concrete examples of the model space $(N^n,\partial N,\widetilde g)$ appearing in Theorem \ref{thm:comparison}.

Firstly,  the assumption on nonvanishing Euler characteristic in Theorem \ref{thm:comparison}  can be dropped in some cases. For example, we have the following theorem. 
	\begin{theorem}\label{thm:comparisonTorus}
	Let $(N^n,\partial N,\widetilde g)$ be a compact spin Riemannian manifold with boundary. Suppose that $g$ is a metric on $N$ such that $\widetilde g=u^2 g$ for some positive function $u\in C^\infty(N)$. If
	\begin{enumerate}
		\item $N=X^k\times\mathbb T^{n-k}$, where $X$ is  a manifold with non-zero Euler characteristic and $\mathbb T^{n-k}$ is the $(n-k)$-torus,
		\item $g=g_X+g_{\textup{flat}}$, where $g_X$ has non-negative curvature operator and $g_{\textup{flat}}$ is a flat metric on the torus,
		\item $\partial N=\partial X\times\mathbb T^{n-k}$ has non-negative second fundamental form in $g$,
		\item $u^{-1}$ has non-negative Hessian with respect to $g$,
	\end{enumerate}
	then $(N,\partial N,\widetilde g)$ is spin scalar-mean extremal.
\end{theorem}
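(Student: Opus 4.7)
The plan is to run the spinorial proof of Theorem~\ref{thm:comparison} with the Dirac bundle twisted by a Mishchenko-type $C^\ast_r(\Gamma)$-bundle pulled back from the torus factor, where $\Gamma = \pi_1(\mathbb T^{n-k}) = \mathbb Z^{n-k}$. This replaces the role of the Euler characteristic of $N$ (which now vanishes since $\chi(\mathbb T^{n-k}) = 0$) by a higher index in $K_0(C^\ast_r(\Gamma))$, which remains non-zero because $\chi(X) \neq 0$ and the Mishchenko class of the torus factor is a non-trivial generator.

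More concretely, I would form the flat Hermitian $C^\ast_r(\Gamma)$-bundle $\mathcal L_N$ over $N$ associated to the composition $\pi_1(N) \to \pi_1(\mathbb T^{n-k}) = \Gamma$, pull it back to $\mathcal L_M = \widetilde f^\ast \mathcal L_N$, and then consider the $C^\ast_r(\Gamma)$-linear Dirac operator $D^{\mathcal L}$ on $E \hotimes \mathcal L_M$, where $E = S(TM \oplus f^\ast TN)$ is the bundle from the proof of Theorem~\ref{thm:comparison}. Setting $B^{\mathcal L} = D^{\mathcal L} + \Psi$ with the same boundary condition \eqref{eq:bdryCondition}, every step of the proof of Theorem~\ref{thm:comparison} carries over verbatim: because $\mathcal L_M$ is flat, it contributes no extra curvature terms to the Lichnerowicz-type identity involving $\mathcal R$ in \eqref{eq:R}, no additional contributions to the boundary Dirac operator \eqref{eq:boundarydirac} or the boundary connection, and the key estimate \eqref{eq:B4} holds for $B^{\mathcal L}$ acting on $C^\ast_r(\Gamma)$-valued sections. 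Consequently, if $\Sc_{g_M}$ or $H_{g_M}$ strictly exceeds the corresponding pullback from $(N, \widetilde g)$ anywhere, then $B^{\mathcal L}$ subject to \eqref{eq:bdryCondition} is invertible as a regular $C^\ast_r(\Gamma)$-Fredholm operator.

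To derive a contradiction I would compute the higher index $\ind(B^{\mathcal L}) = \ind(D^{\mathcal L}) \in K_0(C^\ast_r(\Gamma))$, using that the potential $\Psi$ is a bounded zeroth-order perturbation and hence does not affect the index. Applying the Mishchenko-Fomenko index theorem (combined with the analogue of the Atiyah-Singer formula for Dirac operators with local Clifford boundary conditions used in the proof of Theorem~\ref{thm:comparison}) and the multiplicativity of the higher index under the product decomposition $N = X \times \mathbb T^{n-k}$, this index equals, up to the standard identification, $\deg(\widetilde f) \cdot \chi(X) \cdot [\mathcal L_{\mathbb T^{n-k}}]$, where $[\mathcal L_{\mathbb T^{n-k}}] \in K_0(C^\ast_r(\mathbb Z^{n-k}))$ is the Mishchenko class of the torus, a classical non-zero generator. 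Since $\chi(X) \neq 0$, the index is non-zero, contradicting the invertibility above.

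The main obstacle I anticipate is rigorously justifying the boundary-version of the Mishchenko-Fomenko index formula and the product decomposition of the higher index in the presence of both the local boundary condition \eqref{eq:bdryCondition} and the zeroth-order potential $\Psi$. However, the spin structure on $TM \oplus f^\ast TN$ with boundary condition \eqref{eq:bdryCondition} realizes exactly the same topological setup as in the Lott-type framework used in the proof of Theorem~\ref{thm:comparison}, and the flat twist by $\mathcal L_M$ commutes with all symbol-level constructions; so the argument should reduce to standard Mishchenko-Fomenko formalism, although making this reduction precise is the technically delicate step.
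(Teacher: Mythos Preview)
Your approach has a genuine gap in the index computation: the higher index $\ind(D^{\mathcal L})\in K_0(C^*_r(\mathbb Z^{n-k}))$ actually \emph{vanishes}, so no contradiction follows. The operator $D$ on $E=S(TM\oplus f^*TN)$ is of de Rham/Euler type; on the torus factor it contributes the $K$-homology class of the Euler operator of $\mathbb T^{n-k}$. Since $T\mathbb T^{n-k}$ is trivial, one has $[S^+(T\mathbb T^{n-k})]-[S^-(T\mathbb T^{n-k})]=0$ in $K^0(\mathbb T^{n-k})$, equivalently the $K$-homology Euler class of the torus is zero. Concretely, under the Fourier identification $C^*_r(\mathbb Z^{n-k})\cong C(\widehat{\mathbb T}^{n-k})$ the Mishchenko twist becomes the family of flat line bundles $L_\theta$, and for every $\theta$ the twisted Euler characteristic of $\mathbb T^{n-k}$ is zero; hence the family index vanishes in $K^0(\widehat{\mathbb T}^{n-k})$. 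Your formula $\deg(\widetilde f)\cdot\chi(X)\cdot[\mathcal L_{\mathbb T^{n-k}}]$ conflates this with the higher index of the \emph{spin Dirac} operator on the torus, which is indeed a nonzero generator---but that is not the operator arising here.

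This is exactly why the paper does \emph{not} twist by the flat Mishchenko bundle. Instead it replaces the trivial factor $f^*S(T\mathbb T^{n-k})$ inside $E$ by a finite-dimensional almost flat Bott bundle on a large finite cover of the torus; the Bott bundle carries the same $Cl(n-k)$-module structure (so the Clifford actions $c$ needed for the potential $\Psi$ and the boundary condition \eqref{eq:bdryCondition} still make sense) but has nontrivial top Chern class, so the twisted index becomes $\deg(f)\cdot\chi(X)\ne 0$. The price is that the Bott connection is only almost flat, producing small curvature error terms in the analogues of \eqref{eq:R>=} and \eqref{eq:B4}; these are absorbed by passing to ever larger covers together with a Poincar\'e-type inequality, as in \cite[Section~4]{WangXiedegnerate}. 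If you want a $C^*$-algebraic variant, you would likewise need to replace the $S(T\mathbb T^{n-k})$-factor by a bundle representing a nontrivial $K$-theory class on the torus, not merely tensor $E$ by the flat Mishchenko bundle.
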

\begin{proof}
	The proof essentially follows the exact same strategy as that of Theorem \ref{thm:comparison}, except we shall consider a large finite cover of $N$ and twist the Dirac operator by an extra almost flat Bott vector bundle along the torus. This extra almost flat bundle makes the index of the  corresponding twisted Dirac operator have non-vanishing index. Of course, this extra bundle will introduce small error terms in  the corresponding estimates analogous to those in the proof of Theorem \ref{thm:comparison}. However, these small error terms can be handled by a Poincar\'e type  inequality as in \cite[proof of Theorem 1.5 in Section 4]{WangXiedegnerate}.
\end{proof}

Theorem \ref{thm:comparison} and Theorem \ref{thm:comparisonTorus} generalize various  scalar-mean extremality and rigidity results of warped product manifolds or submanifolds of warped product manifolds in the literature. We give two examples here. 

\begin{example}\label{example:warped}
	Let $Y^{n-1}=Z^{k}\times\mathbb T^{n-1-k}$ be the product of a closed manifold with non-zero Euler characteristic and the $(n-1-k)$-torus, equipped with a metric $h$ with non-negative curvature operator. Suppose that $N=[0,1]\times Y$ is equipped with the warped product metric
	$$\widetilde g=dt^2+\varphi(t)^2h.$$
	Denote	$g=dt^2+h$, which has non-negative curvature operator. Note that $\widetilde g$ can  also be viewed as  a conformal change of $g$. In fact, if we set $t=\rho(s)$, then $$\widetilde g=(\rho')^2ds^2+\varphi(\rho(s))^2h.$$
	If $\rho$ solves the differential equation $\rho'=\varphi(\rho)$, then we may choose  $u=\rho'=\varphi(\rho)$ to be the conformal factor. In this case, we have  
	$$du^{-1}=-\frac{\varphi'(\rho(s))\rho'(s)}{\varphi(\rho(s))^2}ds=-\frac{\varphi'(\rho(s))}{\varphi(\rho(s))}ds.$$
	Therefore, the non-negativity of the Hessian of $u^{-1}$ is equivalent to that
	$$\left(\frac{\varphi'}{\varphi}\right)'\cdot\rho'\leq 0.$$
	As $\rho'=\varphi(\rho)>0$, this is equivalent to 
	\[  \left(\frac{\varphi'}{\varphi}\right)'\leq 0, \]
	that is,  $\varphi$ is log-concave. In particular, Theorem \ref{thm:comparisonTorus} recover the scalar-mean extremality and rigidity theorems of \cite{Cecchini:2021vs}.
\end{example}
\begin{example}\label{example:warpedRegion}
	We follow the same notation of Example \ref{example:warped}. Now suppose $X$ is a compact region with boundary in $[0,1]\times Y$, which is equipped with a warped product metric $\widetilde g=dt^2+\varphi(t)^2h$. Condition (4) in Theorem \ref{thm:comparison} is equivalent to that $\varphi$ is log-concave, as shown in Example \ref{example:warped} above. Condition (3) now yields that $\partial X$ has non-negative second fundamental form in the direct product metric $g=dt^2+h$. In particular, Theorem \ref{thm:comparison}  and Theorem \ref{thm:comparisonTorus} recover the scalar-mean extremality and rigidity theorems of \cite{WXDihedralWarped,ChaiWanWarped}.
\end{example}

Theorem \ref{thm:comparisonTorus} exhibit new scalar-mean extremal examples of warped product metric over torus.
\begin{example}\label{example:warpTorus}
	Let $(Y^m,g_Y)$ be a manifold with boundary, and set $N^n=Y^m\times\mathbb T^{n-m}$. For a smooth function $u$ on $Y$, consider the warped product metric on $N$ given by
	$$\widetilde g=g_Y+u^2 g_{\text{flat}},$$
	which is a conformal deformation of
	$$g=u^{-2}g_Y+g_{\textup{flat}}$$
	with conformal factor $u$. By Theorem \ref{thm:comparison}, if
	\begin{enumerate}
		\item $Y$ has non-zero Euler characteristic,
		\item the curvature operator of $u^{-2}g_Y$ is non-negative,
		\item $\partial Y$ has non-negative second fundamental form with respect to $u^{-2}g_Y$,
		\item $u^{-1}$ has non-negative Hessian with respect to $u^{-2}g_Y$,
	\end{enumerate}
	then $(Y\times\mathbb T^{n-m},\partial Y\times \mathbb T^{n-m},g_Y+u^2 g_{\text{flat}})$ is scalar-mean extremal.
	
Now assume that $Y^m$ is a ball and $g_Y=dr^2+\rho(r)^2g_{\sph^{m-1}}$ is a smooth warped product metric for $r\in[0,R]$. If $u=u(r)$ is radial, then
	$$\widetilde g=dr^2+\rho(r)^2g_{\sph^{m-1}}+u(r)^2g_{\text{flat}}$$
	is a doubly warped product metric.
	
	In particular, for $m=2$, the metric becomes
	$$\widetilde g=dr^2+\rho(r)^2d\theta^2+u(r)^2g_{\text{flat}}.$$
	Direct computation shows that
	$$K=-\frac{u}{\rho}\cdot \left(u\left(\frac{\rho}{u}\right)'\right)',$$
	where $K$ is the Gauss curvature of $u^{-2}(dr^2+\rho^2d\theta^2)$, and
	$$\textbf{H}_{u^{-1}}\left(\frac{\partial_r}{u}\right)=-(\log u)''u\cdot \frac{\partial_r}{u},~\textbf{H}_{u^{-1}}\left(\frac{\rho\partial_\theta}{u}\right)=-u'\left(\frac{\rho}{u}\right)'\cdot \frac{\rho\partial_\theta}{u}.$$
	Therefore, $(Y\times\mathbb T^{n-m},\sph^1\times \mathbb T^k,\widetilde g)$ is scalar-mean extremal if
	$$\left(\frac{\rho}{u}\right)'\Big|_{r=R}\geq 0,~(\log u)''\leq 0,~u'\left(\frac{\rho}{u}\right)'\leq 0,\text{ and }\left(u\left(\frac{\rho}{u}\right)'\right)'\leq 0.$$
	For example, we consider that $\rho(r)=\sin(r)$, and $u$ is any smooth function on $[0,R]\subset [0,\pi/2)$ such that $u\geq 1>0$, $u'(0)=0$, $u'\leq 0$, and $(\log u)''\leq 0$. Then
	$$\widetilde g=dr^2+\sin(r)^2d\theta^2+u(r)^{2\varepsilon}g_{\text{flat}}$$
	satisfies the above requirements provided that $\varepsilon$ is a sufficiently small positive number.
\end{example}

Theorem \ref{thm:comparison} also provides new families of scalar-mean extremal metrics on Euclidean balls, which are neither non-negatively curved or warped products.
\begin{example}\label{example:disk}
	Let $N=\mathbb D^n$ ($n\geq 2$) be the unit ball in $\R^n$ equipped with the standard Euclidean metric $g=g_{\textup{eu}}$. Let $\psi$ be smooth positive function on $\mathbb D^n$ with non-negative Hessian, i.e., $\psi$ is a convex function. Set $u=\psi^{-1}$. Then by Theorem \ref{thm:comparison}, 
	$$\widetilde{g}=u^2 g_{\textup{eu}}=\psi^{-2}g_{\textup{eu}}$$
	is scalar-mean extremal.
	
	As a concrete example, let us consider the convex function
	$$\psi=1+\sum_{i=1}^n a_i x_i^2$$
	where $a_i>0$. Then $\widetilde{g}=\psi^{-2}g_{\textup{eu}}$ is scalar-mean rigid. Its scalar curvature is given by 
	$$\Sc_{\widetilde g}=4(n-1)\left(S+\sum_{i=1}^n a_i x_i^2(S-n a_i)\right)$$
	with $S=\sum_{i=1}^n a_i$. When the $a_i$'s are distinct, the scalar curvature takes both positive and  negative values. Such metrics are not non-negatively curved or warped product metrics: the Ricci tensor of $\widetilde g$  has  distinct eigenvalues everywhere, and $\widetilde g$ does not admit any nontrivial  Killing vector field.
\end{example}

In light of Example \ref{example:disk}, we propose the following question.
\begin{question}
	Besides the standard Euclidean metrics and its conformal deformation as in Example \ref{example:disk}, is it possible to give a complete classification of scalar-mean extremal or rigid metrics on the Euclidean ball?
\end{question}

\end{document}